\documentclass[11pt, oneside]{article}   	% use "amsart" instead of "article" for AMSLaTeX format
\usepackage{geometry}                		% See geometry.pdf to learn the layout options. There are lots.
\geometry{letterpaper}                   		% ... or a4paper or a5paper or ... 
\usepackage{graphicx}				% Use pdf, png, jpg, or eps§ with pdflatex; use eps in DVI mode
								% TeX will automatically convert eps --> pdf in pdflatex		
\usepackage{lmodern}    
\usepackage{ifthen}   
\usepackage[utf8]{inputenc}
\usepackage[T1]{fontenc}
\usepackage{amssymb, amsmath, amsthm, mathtools}
\usepackage{authblk}
\usepackage{mathrsfs} 

%SetFonts

%SetFonts

\newtheorem{theorem}{Theorem}[section]
\newtheorem{lemma}[theorem]{Lemma}

\newtheorem{proposition}[theorem]{Proposition}
\newtheorem{definition}[theorem]{Definition}
%%\newdefinition{example}[theorem]{Example}
%%\newdefinition{xca}[theorem]{Exercise}

%\theoremstyle{remark}
%%\newdefinition{remark}[theorem]{Remark}
%%\newproof{proof}{Proof}

\numberwithin{equation}{section}

\title{The polynomial cluster value problem for Banach spaces}
\author{Isidro Humberto Munive Lima$^1$ \and Sofia Ortega Castillo$^2$}
\date{
Departamento de Matem\'aticas,\\
Centro Universitario de Ciencias Exactas e Ingenier\'ias,\\
Universidad de Guadalajara,\\
Blvd. Marcelino Garc\'ia Barrag\'an \#1421, Esq. Calzada Ol\'impica,\\
Guadalajara, 44430, Jalisco, M\'exico\\
\bigskip
$^1$ isidro.munive@academicos.udg.mx\\
\medskip
%$^2$ Departamento de Matem\'aticas,\\
%Centro Universitario de Ciencias Exactas e Ingenier\'ias,\\
%Universidad de Guadalajara,\\
%Blvd. Marcelino Garc\'ia Barrag\'an \#1421, Esq. Calzada Ol\'impica,\\
%Guadalajara, 44430, Jalisco, M\'exico\\
$^2$ sofia.ortega@academicos.udg.mx\\
\bigskip
\today
}							% Activate to display a given date or no date

\begin{document}
\maketitle

\begin{abstract}
We reduce the polynomial cluster value problem for the algebra of bounded analytic functions, $H^{\infty}$, on the ball of Banach spaces $X$ to the same polynomial cluster value problem for $H^{\infty}$ but on the ball of those spaces which are $\ell_1$-sums of finite dimensional spaces.
\end{abstract}

\section{Introduction}

The cluster value problem deals with the limit behavior at each boundary point of the domain of a bounded holomorphic function on the ball of a Banach space, where such limit is taken with respect to the weak-star topology as we embed such ball in the ball of its second dual. The cluster value problem has been studied in connection to the famous Corona problem, where the latter studies conditions that guarantee the density of a chosen domain in the spectrum of a Banach algebra of functions on such domain.

\medskip

In this work we focus on the Banach algebras of analytic functions on the open unit ball $B_X$ of a Banach space $X$ denoted by $H^{\infty}(B_X)$, $A_{\infty}(B_X)$ and $A_u(B_X)$, that we describe below. We also consider $H_b(X)$, which is a well known Fr\'echet algebra that we also describe below. Here $H^{\infty}(B_X)$ denotes the Banach algebra of bounded analytic functions on $B_X$, $A_{\infty}(B_X)$ denotes the Banach algebra of analytic functions on $B_X$ that extend continuously to the boundary of $B_X$, $A_u(B_X)$ is the Banach algebra of uniformly continuous analytic functions on $B_X$ and $H_b(X)$ consists of the entire functions on $X$ that are bounded on bounded sets of $X$.

\medskip

We will also use the notion of an $\ell_1$-sum of spaces: Given a family of Banach spaces $(X_{\alpha})_{\alpha\in I}$, its $\ell_1$-sum $(\sum X_{\alpha})_1$ is given by
$$(\sum X_{\alpha})_1=\{(x_{\alpha})_{\alpha\in I}\in\prod_{\alpha\in I} X_{\alpha}: \sum_{\alpha\in I}\|x_{\alpha}\|<\infty\},$$
and  it is known to be a Banach space since each $X_{\alpha}$ is so.

\medskip

The polynomial cluster value problem only differs of the original cluster value problem in that we replace the weak-star topology with the polynomial star topology. The polynomial cluster value problem for the Banach algebra $H^{\infty}(B_X)$ was first introduced in \cite{JO-CVPSCF} under the name "the cluster value problem for $H^{\infty}(B)$ over $A_u(B)$" and such problem was studied for spaces of continuous functions, as it was easier to work with this problem than the original one. Later on, the polynomial cluster value problem was studied in \cite{OP-PCVP} and its relevance was argued in depth; moreover, solutions were provided in some cases, including at boundary points of the ball of $\ell_1$.

\medskip

In this article, we show a reduction of the polynomial cluster value problem of $H^{\infty}(B_X)$ for Banach spaces $X$ to the polynomial cluster value problem on the ball of those spaces which are $\ell_1$-sums of finite dimensional spaces, as it was done in \cite{JO-CVPBS} but for the original cluster value problem. Hence, it remains essential to solve the polynomial cluster value problem in spaces such as $\ell_1$ and the finite dimensional $\ell_1^n$ with $n\in\mathbb{N}$, whose balls have been shown to be strongly pseudoconvex \cite{O-SPBS}, leading us to conjecture that a polynomial cluster value theorem holds true for those spaces since the polynomial cluster value problem for strongly pseudoconvex domains in $\mathbb{C}^n$ and sufficiently smooth boundary has a positive answer \cite{Mc}.
\medskip

Let us recall that the polynomial cluster value problem is posed as follows. Given an algebra $H$ of bounded analytic functions on the ball $B_X$ of a Banach space $X$ such that $H$ contains $A_u$ which is the algebra of uniformly continuous analytic functions, and given a point $x^{**}\in\bar{B}_X^{**}$, where $\bar{B}_X^{**}$ is the closed unit ball of the second dual $X^{**}$, we define, for $f\in H$, the polynomial cluster set $\text{Cl}_B^{\mathcal{P}}(f, x^{**})$ to be all the limit values of $f(x_{\alpha})$ over all nets $(x_{\alpha})$ in $B_X$ converging to $x^{**}$ in the polynomial-star topology, which is the topology generated by the Aron-Berner extensions of all polynomials on $X$ to $X^{**}$. In a finite dimensional space $X$ the (polynomial) cluster set simply recovers the boundary behavior of a bounded analytic function defined on its ball, since at each interior point it is a singleton. We now define the polynomial fiber $\text{M}_{x^{**}}^{\mathcal{P}}(H(B_X))$ as the non-zero multiplicative linear functionals on $H$ that, when restricted to $A_u$, coincide with the evaluation at $x^{**}$. The polynomial cluster value problem asks if $\text{Cl}_B^{\mathcal{P}}(f, x^{**})\supset \text{M}_{x^{**}}^{\mathcal{P}}(H(B_X)) (f)$. The other inclusion always holds as shown in \cite{OP-PCVP}. Moreover, the polynomial cluster value problem has been argued to be just as valid as the original cluster value problem \cite{OP-PCVP}, where it would not be surprising if the original cluster value problem always had a positive answer. 

\medskip

We reduce the polynomial cluster value problem for Banach spaces to $\ell_1$-sums of finite dimensional spaces for the algebra $H^{\infty}$ of all bounded analytic functions. We start with the separable case, proving a couple of lemmas for that end, see Lemma \ref{lemma2.1} and Lemma \ref{lemma2.2} herein. Lemma \ref{lemma2.1} is proven and posed slightly differently in \cite[Lemma~2.1]{JO-CVPBS} while Lemma \ref{lemma2.2} is an adaptation of Lemma~2.2 in \cite{JO-CVPBS} using instead that the Aron-Berner extension of a bounded holomorphic function on the ball of a Banach space which is an $l_1$-sum of finite dimensional spaces is again bounded and holomorphic on the ball of its second dual. We also work the nonseparable case. Moreover, we see that the method for $H=H^{\infty}$ does not apply to $H=A_{\infty}$ since the Aron-Berner extension of a function holomorphic on the ball of a Banach space that is also continuous on the closed ball results again holomorphic but is not necessarily continuous on the closed ball of the second dual. Indeed, by James' theorem there is a functional on a nonseparable Banach space $X$ that does not reach its norm on the closed ball and we compose it with an appropriate Blaschke product to obtain an holomorphic function on $A_{\infty}(B_X)$ such that its Aron-Berner extension is not in $A_{\infty}(B_{X^{**}})$. Finally, we pose the polynomial cluster value problem for $H_b(X)$ and we find it has a trivial positive answer.

\medskip

The reader may consult the following references on infinite dimensional holomorphy to have a better understanding of the topic. The definition of a polynomial on a Banach space can be found in \cite{M-CABS}. The reader can take a look at \cite{DG} to learn about the polynomial-star topology. The Aron-Berner extension was defined in \cite{AB} and several properties were established in \cite{G-AFBS}. Basic results related to the polynomial cluster value problem appear in \cite{OP-PCVP}.

\section{Reduction of the polynomial cluster value problem for $H^{\infty}$ to $\ell_1$-sums of finite dimensional spaces: separable case}

In order to show the desired reduction of the polynomial cluster value problem, we will need the next two lemmas. The first is a reformulation of what is shown in Lemma 2.1 in \cite{JO-CVPBS} while the second lemma is an adaptation of Lemma 2.2 in \cite{JO-CVPBS} to suit our needs in this work. Here, for $Z$ a Banach space and $f\in H^{\infty}(B_Z)$, $\tilde{f}$ denotes its Aron-Berner extension and if $S:Z_1\to Z_2$ is a bounded operator, $S^*$ denotes its adjoint.

\begin{lemma}\label{lemma2.1}
Let $Y$ be a separable Banach space and $Y_1\subset Y_2 \subset Y_3 \subset \dots$ an increasing sequence of finite dimensional subspaces whose union is dense in $Y$. Set $X=(\sum Y_n)_1$. Then the isometric quotient map $Q:X\to Y$ defined by $$Q(z_n)_n=\sum_{n=1}^{\infty} z_n$$ induces the isometric algebra homomorphism $Q^{\#}: H^{\infty}(B_Y)\to H^{\infty}(B_X)$ given by $$Q^{\#}(f)=f\circ Q, \quad \text{ for all }f\in H^{\infty}(B_Y),$$ and it satisfies that $Q^{\#}(A_u(B_Y))\subset A_u(B_X)$.
\end{lemma}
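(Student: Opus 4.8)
The plan is to establish the four assertions in turn: that $Q$ is contractive, that it is in fact an isometric quotient map, that $Q^\#$ is a well-defined isometric algebra homomorphism, and finally that $Q^\#$ carries $A_u(B_Y)$ into $A_u(B_X)$. First I would check that $Q$ is well-defined and norm-one: for $(z_n)_n\in X$ the series $\sum_n z_n$ converges absolutely in $Y$ since $\sum_n\|z_n\|<\infty$ and each $z_n\in Y_n\subset Y$, and $\|Q(z_n)_n\|=\|\sum_n z_n\|\le\sum_n\|z_n\|=\|(z_n)_n\|_X$, so $\|Q\|\le 1$ and $Q(B_X)\subseteq B_Y$.

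The crux is the reverse inclusion $B_Y\subseteq Q(B_X)$, which is exactly where the density of $\bigcup_n Y_n$ is used. Given $y\in B_Y$ with $\|y\|=r<1$, I would invoke density to choose a strictly increasing sequence of indices $m_k$ together with $w_k\in Y_{m_k}$ such that $\delta_k:=\|y-w_k\|$ decays fast, say $\delta_k<(1-r)/2^{k+1}$, and put $w_0=0$. Since the $Y_n$ are increasing, the telescoping differences $z_k:=w_k-w_{k-1}$ lie in $Y_{m_k}$ and satisfy $\sum_k z_k=\lim_k w_k=y$. A triangle-inequality estimate then bounds the $\ell_1$-norm by $\sum_k\|z_k\|\le\|w_1\|+\sum_{k\ge 2}(\delta_k+\delta_{k-1})\le r+2\sum_k\delta_k<r+(1-r)=1$. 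Placing $z_k$ in the $m_k$-th coordinate and zero elsewhere yields a preimage lying in $B_X$, so $Q(B_X)=B_Y$; letting the $\delta_k$ tend to zero shows moreover that $\inf\{\|x\|_X:Qx=y\}=\|y\|$, so $Q$ is an isometric quotient map with $\|Q\|=1$. I expect this telescoping estimate, together with the coordinate bookkeeping, to be the only genuinely nontrivial step.

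The remaining assertions follow quickly from the structure of $Q$. Being bounded and linear, $Q$ is holomorphic, so $f\circ Q$ is holomorphic on $B_X$ for every $f\in H^{\infty}(B_Y)$; and because $Q$ maps $B_X$ onto $B_Y$ we obtain $\|Q^{\#}(f)\|_{\infty}=\sup_{x\in B_X}|f(Qx)|=\sup_{y\in B_Y}|f(y)|=\|f\|_{\infty}$, so $Q^{\#}$ is an isometry into $H^{\infty}(B_X)$. Linearity, multiplicativity, and unit-preservation of $f\mapsto f\circ Q$ are immediate, so $Q^{\#}$ is an isometric algebra homomorphism. Finally, for $f\in A_u(B_Y)$ and $\epsilon>0$, picking $\delta$ from the uniform continuity of $f$ and using $\|Qx-Qx'\|\le\|x-x'\|_X$ shows that the same $\delta$ witnesses uniform continuity of $f\circ Q$ on $B_X$; since $f\circ Q$ is already holomorphic, we conclude $Q^{\#}(f)\in A_u(B_X)$, giving $Q^{\#}(A_u(B_Y))\subseteq A_u(B_X)$ as claimed.
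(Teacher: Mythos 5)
Your proof is correct and takes essentially the same route as the argument the paper relies on (the paper itself defers this lemma to \cite[Lemma~2.1]{JO-CVPBS}): the telescoping approximation $z_k=w_k-w_{k-1}$ showing that $Q$ is an isometric quotient with $Q(B_X)=B_Y$ is the key step there as well, and the isometry of $Q^{\#}$, the homomorphism properties, and the preservation of uniform continuity then follow formally exactly as you describe. No gaps.
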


\begin{definition}\label{def2.2}
For $Y$ and $X$ as in Lemma \ref{lemma2.1}, let us define $S:X^*\to Y^*$ densely as in \cite{JO-CVPBS}: for $y\in \bigcup_k Y_k$ let $S_n(y)=(z_i)_i\in X$ be given by
$$z_i=\begin{cases} y, & \text{ if }i=n \text{ and } y\in Y_n\\ 0, & \text{ otherwise}\end{cases}$$
and if $\mathcal{U}$ is a free ultrafilter over $\mathbb{N}$ we consider for $x^*\in X^*$,
$$S(x^*)(y)=\lim_{n\in \mathcal{U}} x^*(S_n(y)).$$ In this way we obtain that $Sx^*$ is a bounded linear functional that extends to $Y$, and in turn $S$ is a bounded operator with norm bounded by $1$.
\end{definition}

\begin{lemma}\label{lemma2.2}
Under the assumptions of Lemma \ref{lemma2.1} and for $S$ the bounded operator constructed above, the function $T: H^{\infty}(B_X)\to H^{\infty}(B_Y)$ given by $$T(f)=\tilde{f}\circ S^{*}|_Y$$ is a norm one algebra homomorphism such that $T(X^*)\subset Y^*$, $T(A_u(B_X))\subset A_u(B_Y)$ and $T\circ Q^{\#}=I_{H^{\infty}(B_Y)}$.
\end{lemma}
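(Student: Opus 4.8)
The plan is to check the asserted properties one at a time, the crucial inputs being the hypothesis (available because $X$ is an $\ell_1$-sum of finite dimensional spaces) that the Aron-Berner extension $\tilde f$ of every $f\in H^{\infty}(B_X)$ is again bounded and holomorphic on $B_{X^{**}}$, together with two standard features of the Aron-Berner extension: its multiplicativity, $\widetilde{fg}=\tilde f\,\tilde g$, and its naturality under composition with a bounded operator $u$, namely $\widetilde{g\circ u}=\tilde g\circ u^{**}$. First I would settle well-definedness and the norm bound. Since $\|S\|\le 1$ we get $\|S^{*}\|\le 1$, so $S^{*}|_Y$ is a bounded linear (hence holomorphic) map carrying $B_Y$ into $B_{X^{**}}$; as $\tilde f\in H^{\infty}(B_{X^{**}})$ with $\|\tilde f\|_{B_{X^{**}}}=\|f\|_{B_X}$, the composite $T(f)=\tilde f\circ S^{*}|_Y$ is holomorphic on $B_Y$ with $\|T(f)\|_{B_Y}\le\|f\|_{B_X}$, so $\|T\|\le 1$; and $T(1)=1$ forces $\|T\|=1$. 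Multiplicativity of the Aron-Berner extension then gives $T(fg)=(\tilde f\,\tilde g)\circ S^{*}|_Y=T(f)\,T(g)$, so $T$ is an algebra homomorphism.

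For the two inclusions I would proceed directly. The Aron-Berner extension of a functional $x^{*}\in X^{*}$ is its canonical image in $X^{***}$, that is, $\widetilde{x^{*}}(\varphi)=\varphi(x^{*})$ for $\varphi\in X^{**}$; hence for $y\in Y$ one computes $T(x^{*})(y)=(S^{*}y)(x^{*})=(Sx^{*})(y)$, so $T(x^{*})=Sx^{*}\in Y^{*}$, and in fact $T|_{X^{*}}=S$. For $T(A_u(B_X))\subset A_u(B_Y)$ I would invoke the known fact that the Aron-Berner extension maps $A_u(B_X)$ into $A_u(B_{X^{**}})$; since $S^{*}|_Y$ is Lipschitz, precomposing a uniformly continuous $\tilde f$ with it preserves uniform continuity, giving $T(f)\in A_u(B_Y)$.

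The heart of the proof, and the step I expect to demand the most care, is the identity $T\circ Q^{\#}=I_{H^{\infty}(B_Y)}$, which I would reduce to a linear statement. The key computation is that $S\circ Q^{*}=I_{Y^{*}}$: for $y^{*}\in Y^{*}$ and $y\in Y_m$, one has $Q(S_n y)=y$ for all $n\ge m$, so $(Q^{*}y^{*})(S_n y)=y^{*}(y)$ eventually, and since $\mathcal{U}$ is free the ultrafilter limit yields $S(Q^{*}y^{*})(y)=y^{*}(y)$; density of $\bigcup_k Y_k$ then gives the identity on all of $Y^{*}$. Dualizing, $Q^{**}\circ S^{*}=(S\circ Q^{*})^{*}=I_{Y^{**}}$, so that $Q^{**}\circ S^{*}|_Y=J$, the canonical embedding $J\colon Y\to Y^{**}$. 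Combining this with naturality and with $\tilde f\circ J=f$ gives
$$T(Q^{\#}f)=\widetilde{f\circ Q}\circ S^{*}|_Y=\tilde f\circ Q^{**}\circ S^{*}|_Y=\tilde f\circ J=f,$$
as required. The only genuinely delicate points are the two black-box properties of the Aron-Berner extension (multiplicativity/naturality and preservation of $A_u$), which are standard, and the verification that the ultrafilter limit defining $S$ interacts with $Q^{*}$ exactly as above; once $S\circ Q^{*}=I_{Y^{*}}$ is in hand, dualization does the rest.
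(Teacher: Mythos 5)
Your proof is correct and follows essentially the same route as the paper: the same norm estimate via $\|S^*\|\le 1$ and the Davie--Gamelin isometry, the same identification $T(x^*)=Sx^*$, and the same two key facts, naturality $\widetilde{f\circ Q}=\tilde f\circ Q^{**}$ (which the paper proves via iterated weak-star limits and you cite as standard) and $Q^{**}\circ S^*=I_{Y^{**}}$ (which you obtain by dualizing $S\circ Q^*=I_{Y^*}$, the identical ultrafilter computation the paper carries out directly on $y^{**}$). The only cosmetic difference is that you get $\|T\|=1$ from $T(1)=1$ rather than from $\|T\circ Q^{\#}\|=1$; both are fine.
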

\begin{proof}
Clearly $T$ is a well defined homomorphism with norm bounded by 1 since the Aron-Berner extension is a norm one homomorphism and $\|S^*|_Y\|\leq\|S^*\|=\|S\|\leq 1$. 

\medskip

Since for every $x^*\in X^*$, $T(x^*)=\widetilde{x^*}\circ S^*|_Y=j(x^*)\circ S^*|_Y$ is a bounded linear functional from $Y$ to $\mathbb{C}$ we have that $T(X^*)\subset Y^*$.

\medskip

Also, it is well known that the Aron-Berner extension of a uniformly continuous analytic function is still uniformly continuous and since $S^*|_Y$ is obviously uniformly continuous, we obtain $T(A_u(B_X))\subset A_u(B_Y)$.

\medskip

Now, given $f\in H^{\infty}(B_Y)$, $$T\circ Q^{\#}(f)=\widetilde{f\circ Q}\circ S^*|_Y.$$ 

\medskip

\textbf{Claim 1}: For all $f\in H^{\infty}(B_Y)$, $\widetilde{f\circ Q}=\tilde{f}\circ Q^{**}$.

\medskip

%Here we have that $\widetilde{f\circ Q}=\tilde{f}\circ Q^{**}$, as 
This follows from the fact that whenever $A\in L(^mY)$ and some nets $(z_{\alpha_i}^i)_{\alpha_i\in I_i}\subset X$ satisfy that $z_{\alpha_i}^i\xrightarrow{w^*} z_{i}^{**}\in X^{**}$ ($i=1, \dots, m$),
$$\widetilde{A\circ Q}(z_1^{**}, \dots, z_m^{**})=\lim_{\alpha_1\in I_1} \dots \lim_{\alpha_m\in I_m} A(Q z_{\alpha_1}^1, \dots, Qz_{\alpha_m}^m),$$
where $Q z_{\alpha_i}^i\xrightarrow{w^*} Q^{**}(z_i^{**})$ so 
$\widetilde{A\circ Q}(z_1^{**}, \dots, z_m^{**})=\tilde{A}(Q^{**}z_1^{**}, \dots, Q^{**}z_m^{**})$, and Claim 1 follows from using the Taylor series expansion of $\widetilde{f\circ Q}$.

\medskip

Therefore, $T\circ Q^{\#}(f)=\tilde{f}\circ Q^{**}\circ S^{*}|_Y$, for $f\in H^{\infty}(B_Y)$. 

\medskip

\textbf{Claim 2}: $Q^{**}\circ S^*=I_{Y^{**}}$.

\medskip

This holds since for $y^{**}\in Y^{**}$ and $y^*\in Y^*$,
%Here, $Q^{**}\circ S^*=I_{Y^{**}}$ since for $y^{**}\in Y^{**}$ and $y^*\in Y^*$,
\begin{align*}
Q^{**}(S^* y^{**})(y^*)&=[Q^{**}(y^{**}\circ S)](y^*)\\
&=y^{**}\circ S \circ Q^*(y^*)\\
&=y^{**}\circ S (y^*\circ Q),
\end{align*}
where for $y\in\bigcup_k Y_k$, 
$$S(y^*\circ Q)(y)=\lim_{n\in \mathcal{U}}y^*\circ Q (S_n y)=y^*(y),$$
so indeed $S(y^*\circ Q)=y^*$ and hence $Q^{**}(S^*y^{**})=y^{**}$, as we wanted.

\medskip

As a consequence it becomes clear that $T\circ Q^{\#}=I_{H^{\infty}(B_Y)}$ and thus 
$$\|T\|=\|T\| \|Q^{\#}\|\geq \|T\circ Q^{\#}\|=1$$
so $T$ is a norm one algebra homomorphism.
\end{proof}

\begin{theorem}
Let $Y$ be a separable Banach space and $Y_1\subset Y_2 \subset Y_3 \subset \dots$ an increasing sequence of finite dimensional subspaces whose union is dense in $Y$. Set $X=(\sum Y_n)_1$. If $H^{\infty}(B_X)$ satisfies the polynomial cluster value theorem at every $x^{**}\in \overline{B_{X^{**}}}$ then $H^{\infty}(B_Y)$ satisfies the polynomial cluster value theorem at every $y^{**}\in \overline{B_{Y^{**}}}$.
\end{theorem}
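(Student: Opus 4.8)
The plan is to transport a polynomial fiber functional on $H^{\infty}(B_Y)$ to one on $H^{\infty}(B_X)$ through the homomorphisms $T$ and $Q^{\#}$ of Lemmas \ref{lemma2.1} and \ref{lemma2.2}, invoke the hypothesis on $X$, and then push the resulting cluster value back down along $Q$. Fix $y^{**}\in\overline{B_{Y^{**}}}$, a functional $\varphi\in \text{M}_{y^{**}}^{\mathcal P}(H^{\infty}(B_Y))$ and $f\in H^{\infty}(B_Y)$; I must produce a net in $B_Y$ realizing $\varphi(f)$ as a polynomial cluster value of $f$ at $y^{**}$. Set $\psi:=\varphi\circ T$. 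Since $T$ is an algebra homomorphism, $\psi$ is multiplicative on $H^{\infty}(B_X)$, and it is nonzero because $\psi\circ Q^{\#}=\varphi\circ(T\circ Q^{\#})=\varphi\neq 0$ by $T\circ Q^{\#}=I_{H^{\infty}(B_Y)}$.

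The first real step is to locate $\psi$ in the right polynomial fiber over $X$. I claim the base point is $x^{**}:=S^{*}y^{**}$, which lies in $\overline{B_{X^{**}}}$ because $\|S^{*}\|=\|S\|\le 1$. Indeed, for $x^{*}\in X^{*}$ one has $T(x^{*})=Sx^{*}\in Y^{*}$, so, using $\varphi|_{Y^{*}}=y^{**}$ (a consequence of $\varphi|_{A_u(B_Y)}=\text{ev}_{y^{**}}$ evaluated on linear functionals), $\psi(x^{*})=\varphi(Sx^{*})=y^{**}(Sx^{*})=(S^{*}y^{**})(x^{*})$, i.e. $\psi|_{X^{*}}=x^{**}$. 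To upgrade this to membership $\psi\in \text{M}_{x^{**}}^{\mathcal P}(H^{\infty}(B_X))$ I must check $\psi|_{A_u(B_X)}=\text{ev}_{x^{**}}$; since for $g\in A_u(B_X)$ one has $Tg\in A_u(B_Y)$ and hence $\psi(g)=\varphi(Tg)=\widetilde{Tg}(y^{**})$, this amounts to the identity $\widetilde{Tg}(y^{**})=\tilde g(S^{*}y^{**})$. I would obtain it either by citing the identification of the polynomial spectrum of $A_u$ with $\overline{B_{X^{**}}}$ (so that a multiplicative functional on $A_u(B_X)$ is determined by its restriction to $X^{*}$), or directly: reduce, by density of polynomials and sup-norm continuity of all maps involved, to an $m$-homogeneous $P$ with symmetric form $A$, and compute $\widetilde{TP}(y^{**})$ and $\tilde P(S^{*}y^{**})$ through the iterated weak-star limit description of the Aron-Berner extension, using that $S^{*}$ is weak-star-to-weak-star continuous as an adjoint and that $S^{*}y=w^{*}\text{-}\lim_{n\in\mathcal U}S_n y$, exactly as in Claims 1 and 2 of Lemma \ref{lemma2.2}.

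With $\psi\in \text{M}_{x^{**}}^{\mathcal P}(H^{\infty}(B_X))$ in hand, I apply the hypothesis (the polynomial cluster value theorem for $H^{\infty}(B_X)$ at $x^{**}$) to the function $Q^{\#}f=f\circ Q\in H^{\infty}(B_X)$, obtaining $\psi(Q^{\#}f)\in \text{Cl}_B^{\mathcal P}(f\circ Q,x^{**})$. On the one hand $\psi(Q^{\#}f)=\varphi(TQ^{\#}f)=\varphi(f)$. On the other hand I transfer this cluster value along $Q$: if $(z_{\alpha})\subset B_X$ satisfies $z_{\alpha}\to x^{**}$ in the polynomial-star topology and $f(Qz_{\alpha})\to\varphi(f)$, put $w_{\alpha}:=Qz_{\alpha}$, which lies in $B_Y$ since $\|Q\|\le 1$. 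For every polynomial $R$ on $Y$, $R\circ Q$ is a polynomial on $X$, so polynomial-star convergence of $(z_{\alpha})$ gives $R(w_{\alpha})=(R\circ Q)(z_{\alpha})\to\widetilde{R\circ Q}(x^{**})$, and Claim 1 of Lemma \ref{lemma2.2} ($\widetilde{R\circ Q}=\tilde R\circ Q^{**}$) together with Claim 2 ($Q^{**}\circ S^{*}=I_{Y^{**}}$, whence $Q^{**}x^{**}=y^{**}$) identifies this limit as $\tilde R(y^{**})$. Thus $w_{\alpha}\to y^{**}$ in the polynomial-star topology while $f(w_{\alpha})\to\varphi(f)$, so $\varphi(f)\in \text{Cl}_B^{\mathcal P}(f,y^{**})$. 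As $\varphi$ and $f$ were arbitrary, $H^{\infty}(B_Y)$ satisfies the polynomial cluster value theorem at every $y^{**}\in\overline{B_{Y^{**}}}$.

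I expect the fiber computation of the second paragraph to be the \emph{main obstacle}. Unlike the transfer along $Q$ in the last paragraph, which is clean because the linear map $Q$ maps $X$ genuinely into $Y$ and so interacts transparently with the Aron-Berner extension through Claims 1 and 2, the identity $\widetilde{Tg}(y^{**})=\tilde g(S^{*}y^{**})$ concerns composition with $S^{*}|_Y\colon Y\to X^{**}$ landing in a bidual, where the Aron-Berner extension is only separately weak-star continuous in its last-extended variable and a naive $Q^{\#}$-style computation produces instead a double extension $\tilde{\tilde g}\circ (S^{*}|_Y)^{**}$. Navigating this via the explicit ultrafilter description of $S$ is where the technical weight of the separable case lies; everything else is a formal consequence of Lemmas \ref{lemma2.1} and \ref{lemma2.2}.
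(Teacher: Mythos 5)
Your proposal is correct and follows essentially the same route as the paper: transport the fiber functional via $\tau\mapsto\tau\circ T$, identify its base point as $S^{*}y^{**}$ by proving $\widetilde{Th}=\tilde h\circ S^{*}$ on $A_u(B_X)$ (the paper's Claim 3, established exactly by your ``direct'' reduction to $m$-homogeneous polynomials and polynomial-star density of $B_Y$ in $\overline{B_{Y^{**}}}$), invoke the hypothesis at $S^{*}y^{**}$, and push the cluster net forward along $Q$ using $\widetilde{R\circ Q}=\tilde R\circ Q^{**}$ and $Q^{**}\circ S^{*}=I_{Y^{**}}$. One small caution: your first suggested alternative for the fiber computation --- that a multiplicative functional on $A_u(B_X)$ is determined by its restriction to $X^{*}$ --- is not valid in general (the fibers of the spectrum of $A_u(B_X)$ over $\overline{B_{X^{**}}}$ can be nontrivial), so the direct argument you describe is the one to carry out, and it is the one the paper uses.
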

\begin{proof}

We know that $(M_{x^{**}}^{\mathcal{P}}(H^{\infty}(B_X)))(f)\subset \text{Cl}_{B_X}^{\mathcal{P}}(f, x^{**})$ for all $f\in H^{\infty}(B_X)$ and $x^{**}\in\overline{B_{X^{**}}}$. Let us show that $(M_{y^{**}}^{\mathcal{P}}(H^{\infty}(B_Y)))(g)\subset \text{Cl}_{B_Y}^{\mathcal{P}}(g, y^{**})$ for all $g\in H^{\infty}(B_X)$ and $y^{**}\in\overline{B_{Y^{**}}}$.

Let $y^{**}\in\overline{B_{Y^{**}}}$, $\tau\in M_{y^{**}}^{\mathcal{P}}(H^{\infty}(B_Y))$ and $g\in H^{\infty}(B_Y)$. Let $T$ be the algebra homomorphism from $H^{\infty}(B_X)$ to $H^{\infty}(B_Y)$ defined in Lemma \ref{lemma2.2}. From now on, $x^{**}:=S^*(y^{**})\in\overline{B_{X^{**}}}$, $f:=Q^{\#}(g)\in H^{\infty}(B_X)$ and $\varphi:=\tau\circ T$, which is clearly a non-zero multiplicative linear functional on $H^{\infty}(B_X)$.

\medskip

\textbf{Claim 3}: For all $h\in A_u(B_X)$, $\widetilde{Th}=\tilde{h}\circ S^*$.

\medskip

This follows from the density of $B_Y$ in $\overline{B_{Y^{**}}}$ in the polynomial-star topology. Indeed, if $(y_{\alpha})\subset B_Y$ converges to $y^{**}$ in the polynomial-star topology, then as we did in Claim 1 we can assume that $h$ is an $m$-homogeneous polynomial so $\tilde{h}\circ S^*$ is also an $m$-homogeneous polynomial, hence,%then, in particular it converges in the weak-star topology so as argued in Claim 1,
\begin{align*}
\widetilde{Th}(y^{**})&=\lim_{\alpha} Th(y_{\alpha})\\
&=\lim_{\alpha} \tilde{h} \circ S^*(y_{\alpha})\\
&=\tilde{h}\circ S^* (y^{**}).
\end{align*}

\medskip

Then we see that $\varphi\in M_{x^{**}}^{\mathcal{P}}(H^{\infty}(B_X))$ since for all $h\in A_u(B_X)$,
\begin{align*}
\varphi(h)&=\tau(Th)\\
&=\widetilde{Th}(y^{**})\\
&=\tilde{h}\circ S^*(y^{**})\\
%&=\tilde{h}(y^{**}\circ S)\\
&=\tilde{h}(x^{**})
\end{align*}
%where the third equality above follows from the density of $B_Y$ in $\overline{B_{Y^{**}}}$ in the polynomial-star topology and the weak-star continuity of $S^*$, indeed, if $(y_{\alpha})\subset B_Y$ converges to $y^{**}$ in the polynomial-star topology then in particular it converges in the weak-star topology so
%\begin{align*}
%\widetilde{Th}(y^{**})&=\lim_{\alpha} Th(y_{\alpha})\\
%&=\lim_{\alpha} \tilde{h} \circ S^*(y_{\alpha})\\
%&=\tilde{h}\circ S^* (y^{**}).
%\end{align*}

\medskip

Moreover, due to Lemma \ref{lemma2.2} we have that $T\circ Q^{\#}=I_{H^{\infty}(B_Y)}$, so
\begin{equation}\label{eq1}
\tau(g)=\tau(T\circ Q^{\#}g)=\varphi(Q^{\#}g)=\varphi(f).
\end{equation}

\textbf{Claim 4}: $\text{Cl}_{B_X}^{\mathcal{P}}(g\circ Q, x^{**})\subset \text{Cl}_{B_Y}^{\mathcal{P}}(g, Q^{**}x^{**})$.

\medskip

This follows from having that whenever $(x_{\alpha})$ converges in the polynomial-star topology to $x^{**}$, $Q(x_{\alpha})$ converges in the polynomial-star topology to $Q^{**}x^{**}$ since, as seen in Claim 1, $\widetilde{P\circ Q}=\tilde{P}\circ Q^{**}$ whenever $P$ is an $m$-homogeneous polynomial on $Y$.

\medskip

\textbf{Claim 5}: $Q^{**}x^{**}=y^{**}$.

\medskip

This equality follows from having $Q^{**}\circ S^*=I_{Y^{**}}$ as shown in Claim 2, indeed
\begin{align*}
Q^{**}(x^{**})=Q^{**}\circ S^{*}(y^{**})=y^{**}.
\end{align*}

\medskip 

It becomes now clear that 
\begin{equation}\label{eq2}
\tau(g)=\varphi(f)\in\text{Cl}_{B_X}^{\mathcal{P}}(f, x^{**})=\text{Cl}_{B_X}^{\mathcal{P}}(g\circ Q, x^{**})\subset \text{Cl}_{B_Y}^{\mathcal{P}}(g, Q^{**}x^{**})=\text{Cl}_{B_Y}^{\mathcal{P}}(g, y^{**}).
\end{equation}
%where the last inclusion above follows from having that whenever $(x_{\alpha})$ converges in the polynomial-star topology to $x^{**}$, $Q(x_{\alpha})$ converges in the polynomial-star topology to $Q^{**}x^{**}$ since, as seen in Claim 1, $\widetilde{P\circ Q}=\tilde{P}\circ Q^{**}$ whenever $P$ is an $m$-homogeneous polynomial on $Y$.

\medskip

%The last equality in Equation \ref{eq2} follows from having $Q^{**}\circ S^*=I_{Y^{**}}$ as shown in the proof of Lemma \ref{lemma2.2} because
%\begin{align*}
%Q^{**}(x^{**})&=Q^{**}(y^{**}\circ S)\\
%&=Q^{**}\circ S^{*}(y^{**})\\
%&=y^{**}.
%\end{align*}
\end{proof}

\section{Reduction of the polynomial cluster value problem for $H^{\infty}$ to $\ell_1$-sums of finite dimensional spaces: nonseparable case}

As mentioned in \cite{JO-CVPBS}, the reduction of the polynomial cluster value problem for $H^{\infty}$ on the ball of a nonseparable Banach space to the same problem but for spaces that are $\ell_1$-sums of finite dimensional spaces is a straightforward generalization of the separable case. Yet, for the sake of completeness and for the convenience of the reader, we present here the construction and properties of $S$ corresponding to Definition \ref{def2.2} for the nonseparable case. After that, all that is left to show is Claim 8 in which we show a property of $S$ corresponding to Claim 2 for the nonseparable case. All the other constructions and proofs necessary to obtain the desired reduction in the nonseparable case  are proved in the exact same fashion as in the separable case.

\medskip

Let $Y$ be a Banach space with Hamel basis $B=\{e_{\lambda}:\lambda\in \Lambda\}$. Then,
\[
Y=\bigcup_{\substack{|A|<\infty\\ A\subset B}}Y_A,
\]
where $Y_A=\mathrm{span}\,A$ for each $A\subset B$. Let $I=\{A\subset B:|A|<\infty\}$ and set 
\[
X=\left(\sum_{A\in I}Y_{A}\right)_1.
\]
For each $A\in I$, we define
\[
C_A=\{A'\in I:A\subseteq A'\}.
\]
It is not difficult to show that the set
\[
\mathscr{A}=\{C\subseteq I: \text{ there exists }\,A\in I \,\text{such that $C\supseteq C_A$}\}
\]
is a filter. We let $\mathscr{A}_0$ be an ultrafilter over $I$ that contains $\mathscr{A}$.

 Now,  let $S:X^{\ast}\rightarrow Y^{\ast}$ be defined as follows. For $y\in Y$ and $x^{\ast}\in X^{\ast}$, we set 
\[
S_{A'}(y)=(z_A)_{A\in I}\in X,
\]
where
\[
z_A=
\begin{cases}
y, \,\text{if $y\in Y_{A'}$ and $A=A'$},\\
0,\, \text{otherwise}.
\end{cases}
\]

We define 
\begin{eqnarray*}
f_{x^{\ast},y}&:&I\rightarrow \overline{\mathbb{D}}(0,\|x^{\ast}\|\|y\|)\subset \mathbb{C}\\
&& A\rightarrow x^{\ast}(S_A(y)),
\end{eqnarray*}
since $|x^{\ast}(S_A(y))|\leq \|x^{\ast}\|\|y\|$. We then have that 
\begin{equation}\label{eq3.1}
S(x^{\ast})(y):=\lim_{\mathscr{A}_0}f_{x^{\ast},y}
\end{equation}
exists and it is unique by the ultrafilter convergence theorem. Now, let us consider a well-order on $I$, and we denote by $\min C$, for $C\subset I$, the minimum with respect to this well-order. Let $\mathscr{A}_0$ be partially ordered by reverse inclusion. We define a net by
\begin{eqnarray*}
z^{x^{\ast},y}&:&\mathscr{A}_0\rightarrow\mathbb{C}\\
&&C\rightarrow x^{\ast}(S_{\min C}(y)).
\end{eqnarray*}

\medskip

\textbf{Claim 6}. We have that $z^{x^{\ast},y}(C)\xrightarrow{C\in \mathscr{A}_0} S(x^{\ast})(y)$.

\medskip

Let $U_{x^{\ast},y}\subset\mathbb{C}$ be a neighborhood of $S(x^{\ast})(y)$. Then, by \eqref{eq3.1}
\[
C_0:=f^{-1}_{x^{\ast},y}(U_{x^{\ast},y})\in \mathscr{A}_0,\quad \text{i.e. $x^{\ast}(S_A(y))\in U_{x^{\ast},y}$ for every $A\in C_0$}.
\]
Therefore, if $C\subset C_0$, then $\min C\in C\subset C_0$, and hence we get
\[
x^{\ast}(S_{\min C}(y))\in U_{x^{\ast},y}.
\]

\medskip

\textbf{Claim 7}. We have that 
\[
\lim_{C\in\mathscr{A}_0}z^{x^{\ast},\mu y_1+y_2}(C)=\mu \lim_{C\in\mathscr{A}_0}z^{x^{\ast}, y_1}(C)+\lim_{C\in\mathscr{A}_0}z^{x^{\ast},y_2}(C),
\]
for every $\mu\in\mathbb{C}, y_1,y_2\in Y$ and $x^{\ast}\in X^{\ast}$.
\medskip

Indeed, if we set
\[
A_{y_i}:= \bigcap \{A\in I: y_i\in Y_A\}, \quad i=1,2,
\]
we get that if $A\in C_{A_{y_1}\cup A_{y_2}}:=C_{A_{y_1}}\cap C_{A_{y_2}}\in\mathscr{A}_0$,  then we also have $A\in C_{A_{\mu y_1+y_2}}$. Therefore,
\begin{equation}\label{eq3.2}
S_A(\mu y_1+y_2)=\mu S_A(y_1)+S_A(y_2).
\end{equation}

Let $\varepsilon>0$. By Claim 6, there exists $C_{\varepsilon}\in\mathscr{A}_0$ such that if $C\subset C_{\varepsilon}$,
\begin{eqnarray*}
|x^{\ast}(S_{\min C}(y_1))-S(x^{\ast})(y_1)|<\frac{\varepsilon}{3|\mu|+1},\\
|x^{\ast}(S_{\min C}(y_2))-S(x^{\ast})(y_2)|<\frac{\varepsilon}{3},\\
|x^{\ast}(S_{\min C}(\mu y_1+y_2))-S(x^{\ast})(\mu y_1+y_2)|<\frac{\varepsilon}{3}.
\end{eqnarray*}

Then, if $C\in\mathscr{A}_0$ satisfies that $C\subset C_{\varepsilon}\cap C_{A_{y_1}\cup A_{y_2}}$, we get that $\min C\in C\subset C_{A_{y_1}\cup A_{y_2}}$ so by \eqref{eq3.2}
$$
x^{\ast}(S_{\min C}(\mu y_1+y_2))=\mu x^{\ast}(S_{\min C}(y_1))+x^{\ast}(S_{\min C}(y_2)).
$$
and since also $C\subset C_{\varepsilon}$, we have that
\begin{eqnarray*}
&&|S(x^{\ast})(\mu y_1+y_2)-[\mu S(x^{\ast})(y_1)+S(x^{\ast})(y_2)]|\\
&\leq & |S(x^{\ast})(\mu y_1+y_2)-x^{\ast}(S_{\min C}(\mu_1 y_1+y_2))|\\
&&+|\mu x^{\ast}(S_{\min C}(y_1))-\mu S(x^{\ast})(y_1)|\\
&&+|x^{\ast}(S_{\min C}(y_2))-S(x^{\ast})(y_2)|\\
&<& \frac{\varepsilon}{3}+|\mu|\left(\frac{\varepsilon}{3|\mu|+1}\right)+\frac{\varepsilon}{3}<\varepsilon.
\end{eqnarray*}
Given that $\varepsilon>0$ was arbitrary, we get that $S(x^{\ast})(\mu y_1+y_2)=\mu S(x^{\ast})(y_1)+S(x^{\ast})(y_2)$. 

Therefore, $S(x^{\ast})$ is linear, and since
\begin{equation}
\label{normedSx}
\|S(x^{\ast})(y)\|\leq  \|x^{\ast}\|\|y\|, \quad \text{for every $y\in Y$},
\end{equation} 
we obtain $S(x^{\ast})\in X^{\ast}$.

Moreover, we also have that $S$ is linear. Indeed, let $\lambda\in \mathbb{C}, x^{\ast}_1,x^{\ast}_2\in X^{\ast}$ and $y\in Y$, then
\begin{eqnarray*}
S(\lambda x^{\ast}_1+x^{\ast}_2)(y)&=&\lim_{C\in\mathscr{A}_0}(\lambda x^{\ast}_1+x^{\ast}_2)(S_{\min C}(y))\\
&=&\lim_{C\in\mathscr{A}_0}\lambda x^{\ast}_1(S_{\min C}(y))+x^{\ast}_2(S_{\min C}(y))\\
&=&\lambda S(x^{\ast}_1)(y)+S(x^{\ast}_2)(y).
\end{eqnarray*}

By \eqref{normedSx}, we also get that $\|S(x^{\ast})\|\leq \|x^{\ast}\|$, i.e, $S$ is an operator with norm bounded by $1$.

\medskip

\textbf{Claim 8}. $Q^{\ast\ast}\circ S^{\ast}=I_{Y^{\ast\ast}}$.

\medskip

Indeed, if $y^{\ast}\in Y^{\ast}$ and $y\in Y$,
\begin{align*}
S(y^{\ast}\circ Q)(y)&=\lim_{\mathscr{A}_0} f_{y^{\ast}\circ Q, y}\\
&=\lim_{C\in \mathscr{A}_0} y^*\circ Q(S_{\min C}(y)).
\end{align*}
Now, if $C\subset C_{A_y}$ then $\min C\in C_{A_y}$ i.e. $y\in Y_{\min C}$, then $S_{\min C}(y)=(z_A)_{A\in I}$ where $$z_A=\begin{cases} y, &\text{if }A=\min C\\ 0, &\text{otherwise}\end{cases}$$
so $Q(S_{\min C}(y))=y$, obtaining that $y^*\circ Q(S_{\min C}(y))=y^*(y)$. Therefore, $S(y^{\ast}\circ Q)(y)=y^{\ast}(y)$ for every $y\in Y$, in other words
\begin{equation}
\label{Eq4.}
S(y^{\ast}\circ Q)=y^{\ast}.
\end{equation}

In consequence, for $y^{\ast\ast}\in Y^{\ast\ast}$ and $y^{\ast}\in Y^{\ast}$,
\begin{eqnarray*}
Q^{\ast\ast}(S^{\ast}y^{\ast\ast})(y^{\ast})&=& [Q^{\ast\ast}(y^{\ast\ast}\circ S)](y^{\ast})\\
&=& y^{\ast\ast}\circ S\circ Q^{\ast}(y^{\ast})\\
&=& y^{\ast\ast}\circ S(y^{\ast}\circ Q)\\
&=& y^{\ast\ast}(y^{\ast}) \qquad (\text{by \eqref{Eq4.}}).
\end{eqnarray*}

Hence, $Q^{\ast\ast}(S^{\ast}y^{\ast\ast})=y^{\ast\ast}$ for every $y^{\ast\ast}\in Y^{\ast\ast}$, in other words, $Q^{\ast\ast}\circ S^{\ast}=I_{Y^{\ast\ast}}$.

\section{The polynomial cluster value problem for $A_{\infty}(B_X)$ and $H_b(X)$}

An attempt to reduce the polynomial cluster value problem for $A_{\infty}(B_X)$ as we did in the previous section turns out unfruitful because the Aron-Berner extension of a function in $A_{\infty}(B_X)$, for $X$ a non-reflexive space such as an $\ell_1$-sum of finite dimensional spaces, is not necessarily again a function that extends continuously to the boundary of $B_X$. Let us prove this assertion via a construction based on \cite[Theorem~12.2]{ACG} and \cite[\S 2]{CGKM}.

\begin{proposition}
If $X$ is not reflexive then there exists $f\in A_{\infty}(B_X)$ such that its Aron Berner extension $\tilde{f}$ is not an element of $A_{\infty}(B_{X^{**}})$.
\end{proposition}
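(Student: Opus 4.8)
The plan is to exploit the hint embedded in the introduction and in the statement itself: since $X$ is not reflexive, James' theorem guarantees a norm-one functional $x^{*}\in X^{*}$ that does not attain its norm on $\overline{B_X}$, i.e. $\sup_{x\in B_X}|x^{*}(x)|=1$ but $|x^{*}(x)|<1$ for every $x\in\overline{B_X}$. First I would fix such an $x^{*}$, and then compose it with a suitable scalar analytic function $b:\mathbb{D}\to\mathbb{D}$ to produce $f=b\circ x^{*}\in A_{\infty}(B_X)$. The role of $b$ is to detect the boundary value $1$ that $x^{*}$ approaches but never reaches: I would take $b$ to be a Blaschke product (or more simply a conformal map of $\mathbb{D}$) arranged so that $b$ extends continuously to $\overline{\mathbb{D}}$ and so that the composite $f=b\circ x^{*}$ is uniformly continuous on $B_X$, hence lies in $A_u(B_X)\subset A_{\infty}(B_X)$; the key analytic feature I want from $b$ is that its radial limit at the point $1\in\partial\mathbb{D}$ differs sharply (is discontinuous in the appropriate sense) from its behavior at interior points, so that the failure of $x^{*}$ to attain its norm gets converted into a genuine discontinuity of $\tilde f$ at the extended level.

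Next I would pass to the Aron–Berner extension. The extension of $x^{*}$ is its canonical image $\widetilde{x^{*}}=j(x^{*})\in X^{***}$ viewed as a $w^{*}$-continuous functional on $X^{**}$, and the Aron–Berner extension commutes with composition by the scalar analytic function in the sense that $\tilde f=b\circ\widetilde{x^{*}}$ on $B_{X^{**}}$ (this follows by the same Taylor-series argument used in Claim 1 of Lemma \ref{lemma2.2}, applied monomial by monomial to $b$). The crucial point is that although $x^{*}$ does not attain its norm on $\overline{B_X}$, its extension $\widetilde{x^{*}}$ \emph{does} attain the value $1$ on $\overline{B_{X^{**}}}$: by Goldstine's theorem $\overline{B_X}$ is $w^{*}$-dense in $\overline{B_{X^{**}}}$, so $\sup_{x^{**}\in\overline{B_{X^{**}}}}|\widetilde{x^{*}}(x^{**})|=1$, and since $\overline{B_{X^{**}}}$ is $w^{*}$-compact and $\widetilde{x^{*}}$ is $w^{*}$-continuous, there is a point $x_0^{**}\in\overline{B_{X^{**}}}$ with $\widetilde{x^{*}}(x_0^{**})=1$. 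Thus $\tilde f(x_0^{**})=b(1)$, a boundary value of $b$.

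Then I would derive the contradiction by showing $\tilde f$ cannot extend continuously to $\overline{B_{X^{**}}}$ at $x_0^{**}$. The idea is that $\widetilde{x^{*}}$ takes the value $1$ at the boundary point $x_0^{**}$ but, because $x^{*}$ never attains its norm on $\overline{B_X}$, there are points $x^{**}\in\overline{B_{X^{**}}}$ arbitrarily close to $x_0^{**}$ (in norm, or along an appropriate net) whose $\widetilde{x^{*}}$-values approach $1$ non-tangentially inside $\mathbb{D}$ from directions where $b$ exhibits its discontinuous boundary behavior. By choosing $b$ so that its cluster set at $1$ is nontrivial (for instance a Blaschke product whose boundary behavior at $1$ oscillates, or a singular inner function), the values $\tilde f(x^{**})=b(\widetilde{x^{*}}(x^{**}))$ fail to converge to $b(1)=\tilde f(x_0^{**})$, so $\tilde f\notin A_{\infty}(B_{X^{**}})$. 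The main obstacle I anticipate is precisely the bookkeeping in this last step: I must verify that the one-dimensional boundary pathology of $b$ genuinely transfers through the extension to a discontinuity of $\tilde f$ on $\overline{B_{X^{**}}}$, which requires exhibiting an explicit net $(x_\beta^{**})\to x_0^{**}$ whose images under $\widetilde{x^{*}}$ approach $1$ along the right path in $\mathbb{D}$; here the constructions of \cite[Theorem~12.2]{ACG} and \cite[\S 2]{CGKM} provide the template, and the delicate choice of the inner/Blaschke function $b$ together with the geometry of $\widetilde{x^{*}}$ near its norm-attaining point $x_0^{**}$ is what makes the discontinuity manifest.
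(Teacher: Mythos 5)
Your overall strategy coincides with the paper's: James' theorem yields $L\in X^{*}$ with $\|L\|=1$ not attained on $\overline{B}_X$; one composes with a Blaschke product $B$, checks $\widetilde{B\circ L}=B\circ L^{**}$ by the Taylor-series argument (as you note, in the spirit of Claim~1 of Lemma~\ref{lemma2.2}), uses $w^{*}$-compactness of $\overline{B}_{X^{**}}$ together with $w^{*}$-continuity of $L^{**}$ to find $x_0^{**}\in\overline{B}_{X^{**}}$ with $L^{**}(x_0^{**})=1$, and then transfers the boundary pathology of $B$ at $1$ to $\tilde f$ along the embedded disk $\{\lambda x_0^{**}:\lambda\in\overline{\mathbb{D}}\}\subset\overline{B}_{X^{**}}$, on which $\tilde f(\lambda x_0^{**})=B(\lambda)$.

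There is, however, one step that fails as written. You propose to arrange that $b$ ``extends continuously to $\overline{\mathbb{D}}$'' so that $f=b\circ L$ is uniformly continuous and hence lies in $A_u(B_X)\subset A_{\infty}(B_X)$. This is incompatible with your later, essential, requirement that $b$ have a nontrivial cluster set at $1$: if $b$ were continuous on $\overline{\mathbb{D}}$, then $\tilde f=b\circ L^{**}$ would be norm-continuous on $\overline{B}_{X^{**}}$ and the construction would collapse. Moreover, for the inner functions actually needed here, $f=B\circ L$ is not uniformly continuous on $B_X$ (note $L(B_X)=\mathbb{D}$ and such a $B$ is not uniformly continuous on $\mathbb{D}$), so the route through $A_u$ is unavailable. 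The correct reason $f\in A_{\infty}(B_X)$ is different and is exactly where the non-attainment of the norm is used a first time: since $|L(x)|<1$ for every $x\in\overline{B}_X$, the functional $L$ maps an open neighborhood of $\overline{B}_X$ into $\mathbb{D}$, so $f$ extends holomorphically and boundedly past the boundary of $B_X$; no boundary regularity of $B$ is required. Finally, the ``delicate bookkeeping'' you defer is the crux but is simpler than you anticipate: choosing the zeros $\zeta_n=(1-1/2^{2n})e^{i/2^n}\to 1$ so that (by the results of Colwell and Cargo cited in the paper) $B$ has a limit of modulus one at $1$ while $B(\zeta_n)=0$, the two sequences $r_nx_0^{**}$ and $\zeta_nx_0^{**}$, both norm-convergent to $x_0^{**}$ inside $B_{X^{**}}$, already show that $\tilde f$ admits no continuous extension at $x_0^{**}$; no auxiliary net or non-tangential approach region is needed.
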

\begin{proof}
Let $X$ be a non-reflexive Banach space. Due to James' theorem, there exists $L\in X^*$ such that $\|L\|=1$ but $|L(x)|<1$ for all $x\in \overline{B}_X$.

\medskip

Meanwhile, $L^{**}\in X^{***}$ is $w^*$-continuous and since $\overline{B}_{X^{**}}$ is $w^*$-compact by the Banach-Alaoglu theorem we get that $L^{**}(\overline{B}_{X^{**}})$ is compact, in particular closed, so there exists $x_0^{**}\in \overline{B}_{X^{**}}$ such that $L^{**}(x_0^{**})=\|L^{**}\|=\|L\|=1$.

\medskip

Note that for $B$ a Blaschke product of the type
\begin{equation}\label{ABE:eq1}
B(\zeta)=\prod_{n=1}^{\infty} \frac{|\zeta_n|}{\zeta_n}\Big(\frac{\zeta_n-\zeta}{1-\overline{\zeta}_n\zeta}\Big) \text{ with }\sum_{n=1}^{\infty}(1-|\zeta_n|)<\infty \text{ and }1>|\zeta_n|> 0
\end{equation}
it holds that $|B(\zeta)|<1$ when $|\zeta|<1$ and we define $B(e^{i\theta})=\lim_{r\nearrow 1} B(r e^{i\theta})$ when it exists (this is well-defined for almost every $\theta$ due to a theorem of F.~Riesz~\cite{R}).

\medskip

Let us take $E=\{L^{**}(x_0^{**})=1\}$. Using \cite[Theorem~1]{C} and proceeding as in the proof of \cite[Theorem~2]{C} in the sense that we use \cite[p.~10]{Ca}, we have that the sequence $(\zeta_n)_n$ given by $\Big((1-1/2^{2n})e^{i/2^n}\Big)_n$ satisfies that $\zeta_n\to L^{**}(x_0^{**})$ and $B(L^{**}(x_0^{**}))$ is well-defined and has modulus one for $B$ the Blaschke product induced by $(\zeta_n)$ as in \eqref{ABE:eq1}.

\medskip

Let $f=B\circ L \in H^{\infty}(B_X)$. Let us see that $f\in A_{\infty}(B_X)$ but its Aron-Berner extension $\tilde{f}$ is not in $A_{\infty}(B_{X^{**}})$.

\medskip

Indeed, $f\in A_{\infty}(B_X)$ since whenever $x\in \overline{B}_X$ we have that $|L(x)|<1-\delta_x$ for some $\delta_x>0$, so that $|L(y)|<1$ when $y\in X$ and $\|y-x\|<\delta_x$. Therefore $f$ extends to be holomorphic and bounded in a neighborhood of $\overline{B}_X$ so in particular $f\in A_{\infty}(B_X)$.

\medskip

Let us now prove that $\tilde{f}=B\circ L^{**}$:

\medskip

Let $B(\zeta)=\sum_{n=0}^{\infty}\alpha_n \zeta^n$ be the Taylor series expansion of $B$ around zero, that converges uniformly on each $r\overline{\mathbb{D}}$ for $0<r<1$. Then, since $\|L\|=1$, for each $r\in(0,1)$ and for all $\epsilon>0$ there exists $N\in\mathbb{N}$ such that, when $x\in r \overline{B}_X$,

$$|B(L(x))-\sum_{n=0}^m \alpha_n (L(x))^n|<\epsilon \text{ when }m\geq N,$$
i.~e. $$|f(x)-\sum_{n=0}^{m}\alpha_n L^n(x)|<\epsilon.$$

Then, since the Aron-Berner extension is an isometric algebra homomorphism, we have that, when $x^{**}\in r\overline{B}_{X^{**}}$,
$$|\tilde{f}(x^{**})-\sum_{n=0}^{m}\alpha_n (L^{**}(x^{**}))^n|\leq\epsilon<2\epsilon,$$
i.~e. $$\tilde{f}=\sum_{n=0}^{\infty}\alpha_n (L^{**})^n=B\circ L^{**}.$$

\medskip

Now, take nonnegative reals $r_n\nearrow 1$ and let $\lambda_n=r_n=r_n L^{**}(x_0^{**})$ for each $n\in \mathbb{N}$. Then $|\lambda_n-\zeta_n|\to 0$ while $|B(\lambda_n)-B(\zeta_n)|=|B(\lambda_n)|\to 1$. And $y_n=\lambda_n x_0^{**}$ and $z_n=\zeta_n x_0^{**}$ are elements of $B_{X^{**}}$ that converge to $x_0^{**}$ yet 
$$|\tilde{f}(y_n)-\tilde{f}(z_n)|=|B(\lambda_n)|\to 1,$$\
so $\tilde{f}\notin A_{\infty}(B_{X^{**}})$.
\end{proof}

Let us conclude this section introducing the polynomial cluster value problem for $H_b(X)$ and proving that we clearly have a cluster value theorem for $H_b(X)$ for every Banach space $X$. 

\medskip

Comparing with the definition of the cluster value problem at the end of \cite{CGMS} for the algebra of analytic functions that are bounded on bounded sets, let us define for $r>0$, $f\in H_b(X)$ and $z\in r\overline{B}_{X^{**}}$, the polynomial $r$-cluster set of $f$ at $z$ as
$$Cl_r(f,z)=\{\lambda\in\mathbb{C}:\text{ there exists }(x_{\alpha})\subset rB_X \text{ such that }x_{\alpha}\xrightarrow{\mathcal{P}} z, f(x_{\alpha})\to\lambda\}$$
where $\mathcal{P}$ denotes the polynomial-star topology on $X^{**}$, also denoted by $\sigma(X^{**}, \mathcal{P}(X))$.

\medskip

Also, given $z\in X^{**}$ and $r>0$, let us define the polynomial $r$-fiber of $\mathcal{M}(H_b(X))$ over $z$ as
$$\mathcal{M}_{z,r}=\{\varphi \in \mathcal{M}(H_b(X)): \pi(\varphi)=z, \varphi\prec rB_X\}$$
where $\pi: \mathcal{M}(H_b(X))\to \mathcal{P}(X)^*$ is given by $\pi(\varphi)=\varphi|_{\mathcal{P}(X)}$ while $\varphi\prec rB_X$ means that $|\varphi(f)|\leq\|f\|_{rB_X}$ for every $f\in H_b(X)$.

\medskip

We will say that there is a cluster value theorem for $H_b(X)$ if $Cl_r(f,z)=\hat{f}(\mathcal{M}_{z,r})$ for every $r>0$, $f\in H_b(X)$ and $z\in r\overline{B}_{X^{**}}$, where $\hat{f}:\mathcal{M}(H_b(U))\to\mathbb{C}$ denotes the Gelfand transform of $f$ given by $\hat{f}(\varphi)=\varphi(f)$.

\medskip

Note that, whenever $f\in H_b(X)$ and $r>0$, we have that $f\in H^{\infty}((r+\epsilon)B_X)$ for any $\epsilon>0$, so $f\in A_u(rB_X)$ due to Schwarz' lemma \cite[Thm.~7.19]{M-CABS} and the convexity of $(r+\epsilon)B_X$. Now, if $z\in r\overline{B}_{X^{**}}$ and $x_{\alpha}\xrightarrow{\mathcal{P}} z$, then $f(x_{\alpha})\to_{\alpha} \tilde{f}(z)$ since $f$ is the uniform limit of polynomials on $rB_X$. Consequently, $Cl_r(f,z)=\{\tilde{f}(z)\}$.

\medskip

Meanwhile, if $f\in H_b(X)$, due to \cite[Theorem~7.13]{M-CABS} we have that the radius of convergence of the Taylor series of $f$ at 0 is $\infty$, so for every $x\in X$,
\begin{equation}\label{ABE:eq2}
f(x)=\sum_{m=0}^{\infty} P^m f(0),
\end{equation}
with uniform convergence in $rB_X$ for every $r>0$, where $P^m f(0)$ is an $m$-homogeneous polynomial on $X$. Then, if also $r>0$ and $z\in r\overline{B}_{X^{**}}$ are fixed, for $\phi\in\mathcal{M}_{z,r}$ we have that $\hat{f}(\mathcal{M}_{z,r})=\{\tilde{f}(z)\}$ since the partial sums in \eqref{ABE:eq2} belong to $\mathcal{P}(X)$ and converge to $f$ uniformly on $rB_X$ so $\varphi(f)=\sum_{m=0}^{\infty} \varphi(P^m f(0))=\sum_{m=0}^{\infty}\widetilde{P^m f(0)}(z)=\tilde{f}(z)$.

\medskip

So, indeed $Cl_r(f,z)=\hat{f}(\mathcal{M}_{z,r})$ for every $r>0$, $f\in H_b(X)$ and $z\in r\overline{B}_{X^{**}}$. Actually we proved that such sets are trivial, equal to $\{\tilde{f}(z)\}$.

\end{document}